\newtheorem{theorem}{Theorem}[section]
\newtheorem{lemma}[theorem]{Lemma}
\theoremstyle{definition}
\newcommand{\ignore}[1]{}
\newcommand{\hcm}[1][1]{\hspace*{#1 cm}}
\newcommand{\rb}[2]{\raisebox{#1 mm}[0mm][0mm]{#2}}
\newcommand{\istrut}[2][0]{\rule[- #1 mm]{0mm}{#1 mm}\rule{0mm}{#2 mm}}
\newcommand{\zero}[1]{\makebox[0mm][l]{$#1$}}
\newcommand{\angbrack}[1]{\left< #1 \right>}
\newcommand{\abs}[1]{\left| #1 \right|}
\newcommand{\Renyi}{R\'{e}nyi}
\newcommand{\Furedi}{F\"{u}redi}
\newcommand{\Korandi}{Kor\'{a}ndi}
\newcommand{\Kovari}{K\H{o}v\'{a}ri}
\newcommand{\Sos}{S\'{o}s}
\newcommand{\Turan}{Tur\'{a}n}
\newcommand{\Gyori}{Gy\H{o}ri}
\newcommand{\Ex}{\operatorname{Ex}}
\newcommand{\bu}{\bullet}
\newcommand{\vect}{\mathbf{v}}
\newcommand{\I}{\mathcal{I}}
\renewcommand{\S}{\mathcal{S}}
\newcommand{\type}{\mathsf{type}}
\title{On the Extremal Functions of Acyclic Forbidden 0--1 Matrices\thanks{S. Pettie is supported by NSF Grant CCF-2221980.  G. Tardos is supported by the National Research, Development and Innovation Office projects K-132696 and SNN-135643 and by the 
ERC Advanced Grants ``ERMiD'' and ``GeoScape.''
This research was performed 
while S. Pettie was visiting \Renyi{} Institute, Budapest.}}
\author{Seth Pettie\\ University of Michigan 
\and
G\'abor Tardos\\
\Renyi{} Institute
}
\date{}
\begin{document}

\maketitle

\begin{abstract}
The extremal theory of forbidden 0--1 matrices 
studies the asymptotic growth of the 
function $\Ex(P,n)$, which is 
the maximum weight of a matrix 
$A\in\{0,1\}^{n\times n}$ whose submatrices avoid 
a fixed pattern $P\in\{0,1\}^{k\times l}$.
This theory has been wildly successful at resolving problems
in
combinatorics~\cite{Klazar00,MarcusT04,CibulkaK12},
discrete and computational geometry~\cite{Furedi90,Aggarwal15,EfratS96,PachS91,Mitchell92,BienstockG91},
structural graph theory~\cite{GuillemotM14,BonnetGKTW21,BonnetKTW22}
and the analysis of data structures~\cite{Pettie10a,KozmaS20},
particularly corollaries of the dynamic optimality conjecture~\cite{ChalermsookGKMS15,ChalermsookG0MS15,ChalermsookGJAP23}.

All these applications use \emph{acyclic} patterns, meaning that
when $P$ is regarded as the adjacency matrix of a bipartite graph, the graph is acyclic.  The biggest open problem in this area is to bound $\Ex(P,n)$ for acyclic $P$.  Prior results~\cite{Pettie-FH11,ParkS13} have only ruled out the strict $O(n\log n)$ bound conjectured by \Furedi{} and Hajnal~\cite{FurediH92}.  It is consistent with prior results that $\forall P. \Ex(P,n)\leq n\log^{1+o(1)} n$, 
and also consistent that $\forall \epsilon>0.\exists P. \Ex(P,n) \geq  n^{2-\epsilon}$.

In this paper we establish a stronger lower bound on the extremal functions of acyclic $P$.  
Specifically, 
we give a new construction of relatively dense 0--1 matrices
with $\Theta(n(\log n/\log\log n)^t)$ 1s 
that avoid an acyclic $X_t$.  Pach and Tardos~\cite{PachTardos06} have conjectured that this type of result is the best possible, i.e., no acyclic $P$ exists for which $\Ex(P,n)\geq n(\log n)^{\omega(1)}$.
\end{abstract}

\section{Introduction}

The theory of forbidden 0--1 matrices subsumes or generalizes
many problems in extremal combinatorics, 
such as Davenport-Schinzel sequences~\cite{HS86,ASS89,Nivasch10,Pettie-DS-JACM,WellmanP18} and their generalizations~\cite{Pettie-GenDS11,Pettie15-SIDMA,FurediH92}, 
Zarankiewicz's problem~\cite{KovariST54}, 
and bipartite \Turan-type subgraph avoidance.
Forbidden 0--1 matrices have been applied to problems in 
discrete and computational geometry, the amortized analysis of data structures,
and in other areas of extremal combinatorics.  
Some highlights in geometry include bounding the number of unit-distances 
in a convex point set~\cite{Furedi90,Aggarwal15}, 
the number of critical placements of an $n$-gon 
in a hippodrome~\cite{EfratS96}, an analysis 
of the Bentley-Ottman line sweeping algorithm~\cite{PachS91},
and an analysis of Mitchel's algorithm for obstacle-avoiding
shortest paths in the plane~\cite{Mitchell92,BienstockG91}.
In data structures, forbidden 0--1 matrices have been 
used to analyze data structures based on binary search trees and path-compression~\cite{Pettie10a}, and more recently, to several 
corollaries of Sleator and Tarjan's~\cite{ST85} \emph{dynamic optimality} conjecture~\cite{ChalermsookGKMS15,ChalermsookG0MS15,ChalermsookGJAP23,KozmaS20}.
The most well-known application of forbidden 0--1 matrices 
is probably Marcus and Tardos's proof~\cite{MarcusT04} 
of the Stanley-Wilf conjecture, via Klazar's reduction~\cite{Klazar00}
to a \Furedi-Hajnal conjecture~\cite{FurediH92}.
They have also been used to bound Stanley-Wilf limits~\cite{Cibulka09,Fox13,CibulkaK17}, 
and to bound the size of sets of permutations 
with some fixed VC-dimension~\cite{CibulkaK12}. 
Most recently, results on searching for forbidden patterns~\cite{GuillemotM14}
inspired the definition of \emph{twin-width} for graphs and other binary structures~\cite{BonnetKTW22,BonnetGKTW21}.

\medskip
If $P\in \{0,1\}^{k\times l}$ and $A\in\{0,1\}^{n\times n}$, 
we say \emph{$A$ contains $P$}, written $P\prec A$, 
if there are rows $r_1<\cdots<r_k$ and 
columns $c_1<\cdots<c_l$ such that $P(i,j)=1 \rightarrow A(r_i,c_j)=1$.
If $P\nprec A$ then $A$ \emph{avoids $P$} or is \emph{$P$-free}.
The basic extremal function is defined as follows
\[
\Ex(P,n) = \max\{\|A\|_1 \mid A\in\{0,1\}^{n\times n} \text{ and } P\nprec A\},
\]
and may be generalized in various ways, e.g., 
to avoid \emph{sets} of forbidden patterns~\cite{Tardos05} or rectangular $A$~\cite{FurediH92,Pettie-GenDS11}, or
to $d$-dimensional matrices/patterns~\cite{KlazarM06,MethukuP17,Geneson19}.
Observe that $P$ and $A$ can be viewed
as incidence matrices of bipartite graphs, where
the two vertex sets are implicitly \emph{ordered}; 
let $G(P)$ be the \emph{unordered} undirected 
graph corresponding to $P$.

\Furedi{} and Hajnal~\cite{FurediH92} attempted to 
systematically classify small forbidden patterns by 
their extremal functions, and managed to do this 
for most weight-4 patterns, with the last holdouts being classified by Tardos~\cite{Tardos05}.  
For any pattern $P$, the extremal function $\Ex(P,n)$ 
is at least as large as the unordered (\Turan) extremal function for $G(P)$.  A natural question is to determine when they are the same, asymptotically, and the maximum factor by which they can differ.

\Furedi{} and Hajnal~\cite{FurediH92} 
concluded their article with several influential 
conjectures.  They first conjectured that when $P$ is a $k\times k$ permutation matrix, that $\Ex(P,n)\leq c_k n$, i.e., it is asymptotically the same as the \Turan{} number of $G(P)$.
Klazar~\cite{Klazar00} proved that this conjecture implies the Stanley-Wilf conjecture, and Marcus and Tardos~\cite{MarcusT04} proved both conjectures.  See~\cite{Geneson09,Cibulka09,CibulkaK17,Fox13,KlazarM06} for generalizations and sharper analyses of the leading constants.
\Furedi{} and Hajnal next conjectured that $\Ex(P,n)$ 
would never be more than a $\log n$-factor larger than 
the (unordered) \Turan{} number of $G(P)$.  
Perhaps doubting this conjecture, they immediately asked whether it held for \emph{acyclic} patterns $P$, i.e., if $G(P)$ is a forest,
is $\Ex(P,n)=O(n\log n)$?

Pach and Tardos~\cite{PachTardos06} refuted the second \Furedi-Hajnal conjecture, by exhibiting arbitrarily large $P$ for which $G(P)=C_{2k}$
is a $2k$-cycle but $\Ex(P,n)=\Omega(n^{4/3})$.  This implies the gap between the ordered and unordered extremal functions is $n^{1/3-\epsilon}$, where $\epsilon=1/k$ can be made arbitrarily small.
However, this refutation did not imply anything about the gap for \emph{acyclic} matrices.  Understanding acyclic patterns is important, as \emph{every} application to geometry, data structures, and combinatorics 
mentioned in the first paragraph uses only acyclic patterns.
Pettie~\cite{Pettie-FH11} disproved the last \Furedi-Hajnal conjecture by exhibiting a specific acyclic pattern $X$ for which $\Ex(X,n)=\Omega(n\log n\log\log n)$.  An unpublished manuscript of Park and Shi~\cite{ParkS13} extended this lower bound to a set of patterns $\{X_m\}$ for which 
$\Ex(X_m,n)=\Omega(n\log n\log\log n\log\log\log n\cdots \log^{(m)} n)$.

The constructions of~\cite{Pettie-FH11,ParkS13}
refuted the \emph{letter} of \Furedi{} and Hajnal's 
conjecture, but certainly not its \emph{spirit}.
Consider several non-trivial 
possibilities for the extremal function of an acyclic $P$.
\begin{description}
    \item[Absolute Polylog$(n)$.] There is an absolute constant $c\geq 1$ such that for any acyclic $P$, $\Ex(P,n)\leq n\log^{c+o(1)}n$.
    \item[Variable Polylog$(n)$.] For any acyclic $P$, there is a constant $c=c(P)$ such that $\Ex(P,n)\leq n\log^{c} n$.
    \item[Subpolynomial.] For every acyclic $P$, there is some $\epsilon(n)=o(1)$ depending on $P$ such that 
    $\Ex(P,n)\leq n^{1+\epsilon(n)}$.
    \item[Polynomial.] For some $c<2$, every acyclic $P$ has $\Ex(P,n)\leq O(n^c)$.
\end{description}
None of these upper bounds have been established or ruled out.
In particular, prior work~\cite{Pettie-FH11,ParkS13} 
does not preclude the possibility that 
{\bfseries Absolute Polylog$(n)$} holds even with $c=1$, 
and it is also possible the {\bfseries Polynomial} fails, 
i.e., for every $\epsilon>0$, there exists an acyclic $P$ for which $\Ex(P,n) = \Omega(n^{2-\epsilon})$.
Pach and Tardos~\cite{PachTardos06} conjectured broadly
that the {\bfseries Variable Polylog$(n)$} upper bound is true,
and conjectured more specifically that 
$\Ex(P,n)=O(n\log^{\|P\|_1-3} n)$.

\bigskip 

The biggest open problem in the theory of forbidden 0--1 matrices is
to understand \emph{acyclic patterns}.  On the upper bound side, we have
a perfect classification of all patterns with four 1s~\cite{FurediH92,Tardos05}, and a good classification for those with five 1s~\cite{PachTardos06}, up to a $\log n$ factor. 
For example, $\Ex(R_1,n)$ and $\Ex(R_2,n)$ are known to be 
$\Omega(n\log n)$ and $O(n\log^2 n)$~\cite{PachTardos06}.
\begin{align*}
R_1&=\left(\begin{array}{ccc}
\bu & \bu & \\
     &    & \bu\\
\bu &   & \bu\end{array}\right)
&R_2&=\left(\begin{array}{cccc}
\bu & \bu &    & \bu\\
\bu &    & \bu
\end{array}\right)
\end{align*}
\Korandi, Tardos, Tomon, and Weidert~\cite{KorandiTTW19}
defined a pattern $P$ to be \emph{class-$s$ degenerate}
if it can be written $P=\left(\begin{array}{c}P'\\P''\end{array}\right)$, where at most one column has a non-zero intersection with both $P'$ and $P''$,
and $P',P''$ are class-$(s-1)$ degenerate. 
(In the diagrams of $S_1,S_2$, a valid row partition cuts
at most one edge.)
Any $P$ with a single row is class-0 degenerate.
They proved that every class-$s$ degenerate $P$ has
\[
\Ex(P,n) \leq n\cdot 2^{O(\log^{1-\frac{1}{s+1}} n)}=n^{1+o(1)}.
\]
For example, $\Ex(S_1,n),\Ex(S_2,n)\leq n\cdot 2^{O(\log^{2/3} n)}$ as $S_1,S_2$ are class-2 degenerate.
\begin{align*}
    S_1&=\left(\begin{array}{cccc}
\bu\zero{\hcm[.1]\rb{0.9}{\rule{.8cm}{0.25mm}}} &   & \bu & \\
\zero{\hcm[.16]\rb{2.1}{\rotatebox{90}{\rule{.23cm}{0.25mm}}}}\bu\zero{\hcm[0]\rb{0.9}{\rule{1.35cm}{0.25mm}}} &   &      & \bu\\
    & \bu\zero{\hcm[.1]\rb{0.9}{\rule{.7cm}{0.25mm}}} &     & \zero{\hcm[.08]\rb{2.1}{\rotatebox{90}{\rule{.22cm}{0.25mm}}}}\bu\end{array}\right)
&S_2&=\left(\begin{array}{cccc}
    & \bu\zero{\hcm[.04]\rb{1}{\rule{.8cm}{0.25mm}}} &      & \bu\\
\bu\zero{\hcm[.1]\rb{0.9}{\rule{.8cm}{0.25mm}}} &     & \bu  &\\
\zero{\hcm[.16]\rb{2.1}{\rotatebox{90}{\rule{.23cm}{0.25mm}}}}\bu\zero{\hcm[.01]\rb{1}{\rule{1.35cm}{0.25mm}}} &     &      & \zero{\hcm[.07]\rb{2.1}{\rotatebox{90}{\rule{.7cm}{0.25mm}}}}\bu\end{array}\right)
\end{align*}
Clearly, a pattern and its transpose have the same extremal function. The smallest non-degenerate acyclic pattern whose transpose is also non-degenerate is 
the ``pretzel'' $T$; we have no non-trivial upper bounds
on $\Ex(T,n)$.
\[
T =\left(\begin{array}{cccc}
\bu\zero{\hcm[.05]\rb{0.9}{\rule{1.55cm}{0.25mm}}} &   &      & \bu\\
    & \bu&      &\\
\zero{\hcm[.15]\rb{2.1}{\rotatebox{90}{\rule{.7cm}{0.25mm}}}}\bu\zero{\hcm[0]\rb{.9}{\rule{1cm}{0.25mm}}} &   & \bu   &\\
    & \zero{\hcm[.15]\rb{2.1}{\rotatebox{90}{\rule{.7cm}{0.25mm}}}}\bu\zero{\hcm[0]\rb{.9}{\rule{.8cm}{0.25mm}}} &     & \zero{\hcm[.07]\rb{2.3}{\rotatebox{90}{\rule{1.15cm}{0.25mm}}}}\bu\end{array}\right)
\]

\subsection{New Result}

Our main result is a \emph{proper} refutation of 
the \Furedi-Hajnal conjecture for acyclic matrices
that rules out the {\bfseries Absolute Polylog$(n)$} world.
For any $t\ge2$, we give a new construction of 0--1 matrices containing
$\Omega(n(\log n/\log\log n)^t)$ 1s, and prove that 
they avoid a particular $2t\times(2t+1)$ 
acyclic pattern $X_t$ with $4t$ 1s.
\[
X_t = \left(\begin{array}{ccccccccc}
&\bu &&\bu&\cdots&\bu&&\bu&\bu\\
&&&&&&&&\bu\\
&\bu&&&&&&&\\
&&&&&&&&\bu\\
&\bu&&&&&&&\vdots\\
&\vdots&&&&&&&\bu\\
&\bu&&&&&&&\\
\bu&&\bu&\cdots&\bu&&\bu&&\bu
\end{array}\right)
\]
\begin{theorem}\label{thm:main}
For every $t\geq 2$, there exists a $2t\times(2t+1)$ 
acyclic pattern $X_t$ such that
\[
\Ex(X_t,n) = \left\{\begin{array}{l}
\Omega(n(\log n/\log\log n)^t),\\
O(n\log^{4t-3} n).
\end{array}\right.
\]
\end{theorem}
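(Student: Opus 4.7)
The theorem combines a lower bound construction and a matching-up-to-$\log^{3t-3} n$ upper bound, which I would prove by quite different methods.

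For the lower bound, the plan is a $t$-fold recursive construction $M_t \in \{0,1\}^{n\times n}$ with $\Theta(n(\log n/\log\log n)^t)$ ones that avoids $X_t$. I would start with a base construction $M_1$ of density $\Theta(\log n/\log\log n)$ per row; this factor is characteristic of Davenport--Schinzel-type constructions of order $3$, so a suitable base matrix should be realizable from that theory. The inductive step obtains $M_t$ from $M_{t-1}$ by a controlled blow-up in which each $1$ of $M_{t-1}$ is replaced by a small dense block; the blocks are arranged so that their combined structure contributes an additional $\log n / \log\log n$ factor to the density, and the positions of $1$s within blocks respect a ``nested'' invariant across levels. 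The invariant must be chosen so that the alternating middle-row structure of $X_t$---odd middle rows meeting column $c_2$, even middle rows meeting column $c_{2t+1}$, with the top and bottom rows anchoring both---cannot be realized using $1$s drawn from different levels.

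For the upper bound $O(n\log^{4t-3} n)$, which is exactly the Pach--Tardos conjectured value for an acyclic $P$ with $\|P\|_1 = 4t$, I would use the column-strip decomposition method. Partition the columns of any $X_t$-free matrix into $\Theta(\log n)$ strips of width $n/\log n$. Within each strip one reduces to a sub-pattern of $X_t$ obtained by collapsing columns that fall in the same strip, giving a pattern with strictly fewer $1$s to which induction applies; the cross-strip contribution is handled by contracting each strip to a single ``super-column'' and invoking the same bound on a derived matrix. Iterating the reduction, with $O(\log n)$ overhead per peeled $1$ of $X_t$, builds up to the claimed $\log^{4t-3} n$ factor, where the $-3$ accounts for three $1$s of $X_t$ that can be charged directly without a logarithmic loss (corresponding to the three non-leaf-like connections in $G(X_t)$, essentially the path $\text{col }2 - \text{row }1 - \text{col }2t{+}1 - \text{row }2t$).

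The principal obstacle is the $X_t$-avoidance argument for the lower bound. A direct product or blow-up construction typically contains $X_t$ because the $2t$ rows of a hypothetical embedding can be drawn from different levels of the recursion, letting the alternating middle rows of $X_t$ weave through the nested structure while the heavy columns $c_2$ and $c_{2t+1}$ collect $1$s from every scale simultaneously. Preventing this requires a strong hierarchy condition on the placement of $1$s, and verifying it will demand a careful case analysis on how the $2t$ rows and $2t{+}1$ columns of an embedded $X_t$-copy distribute across the $t$ levels. Balancing this combinatorial rigidity against the tight density count---losing only an $O(\log\log n)$ factor per level rather than $O(\log n)$---is where the main effort of the proof should lie.
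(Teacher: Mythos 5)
Your proposal does not constitute a proof; it is a research plan, and the crucial part---the $X_t$-avoidance argument for the lower bound---is explicitly left unresolved. You write that ``the principal obstacle is the $X_t$-avoidance argument'' and that verifying the needed hierarchy invariant ``is where the main effort of the proof should lie,'' which is an honest assessment but means the lower bound is not established. Moreover, the route you sketch (a $t$-fold recursive blow-up of a Davenport--Schinzel base matrix with $1$s replaced by dense blocks) is not what the paper does, and it is genuinely unclear that such a blow-up can be made to avoid $X_t$: as you yourself note, a hypothetical $X_t$-embedding can draw its $2t$ rows from different recursion levels while the two heavy columns collect $1$s across all scales, and you offer no mechanism to rule this out. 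The paper instead uses a single-shot algebraic construction: rows and columns are indexed by strings in $\I=[k^t]^{tk}$, and $A_t(a,b)=1$ iff the difference $b-a$ lies in a small structured set $\S$ of $k^t$ vectors whose nonzero coordinates encode a nested prefix structure $\langle j_1,\dots,j_{r-1}\rangle$. Avoidance of $X_t$ then rests entirely on a combinatorial ``type'' function (the first block where two indices differ) and a chain of monotonicity and exclusion properties for types (Lemma~\ref{lem:properties}), combined via Lemmas~\ref{lem:P} and~\ref{lem:Q} which force $\type(a,b)=\type(c,d)=1$ for any would-be embedding of the alternating sub-patterns $P_t$ or $Q_t$, and then a contradiction with Lemma~\ref{lem:properties}(4). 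None of this machinery appears in your sketch, and the recursive framing you propose has no obvious analogue of it.

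On the upper bound your approach is in the right neighborhood but imprecise. The paper simply invokes Lemma~3 of Pach and Tardos~\cite{PachTardos06} (reproduced as Lemma~\ref{lem:PachTardos}) iteratively: each application deletes one weight-one row or column from $X_t$ at a cost of one $\log n$ factor, and $X_t$ can be peeled down from weight $4t$ to a linear-extremal pattern with three $1$s. Your column-strip decomposition is essentially a re-derivation of that lemma from scratch, which is fine, but your explanation of the exponent $4t-3$ (``three non-leaf-like connections'') is not the right accounting. The $-3$ is simply because the terminal pattern after $4t-3$ deletions has three $1$s and extremal function $O(n)$; it has nothing to do with a structural distinction among the $1$s of $X_t$ beyond the fact that three of them survive to the base case. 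As written, your upper bound argument also does not verify at each step that the hypotheses needed for the reduction (a column with a single $1$, flanked appropriately) are actually met by the current residual pattern, which the paper handles by choosing the specific order of row and column removals in the proof of Theorem~\ref{thm:main}.
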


\subsection{Organization}

Section~\ref{sect:construction} presents the construction 
of 0--1 matrices and some of its properties unrelated 
to forbidden substructures.
Section~\ref{sect:forbidden-substructures} analyzes
the forbidden substructures, culminating in a proof of Theorem~\ref{thm:main}.
We conclude in Section~\ref{sect:conclusion} 
with a concise survey of open problems in 
0--1 matrices and related problems in \emph{ordered graphs}.

\section{A Construction of 0--1 Matrices}\label{sect:construction}

Define $[k]=\{1,\ldots,k\}$.
Fix a constant $t\geq 2$.
The rows and columns of $A_t$ are indexed
by length-$(tk)$ strings in the set
\[
\I = [k^t]^{tk}.
\]
An element of $\I$ is partitioned into 
$t$ \emph{blocks}, each of length $k$.  
If $a\in \I$,
let $a(p)\in [k^{t}]^k$ be its $p$th block, and
$a(p,q)\in [k^{t}]$ be the $q$th coordinate in block $p$.
We use angular brackets to denote any 
injective mapping from $[k]^r$ to $[k^r]$,
e.g.,  $\angbrack{j_1,j_2,j_3} = (j_1-1)k^2+(j_2-1)k+j_3$.
For $(j_1,\dots,j_t)\in[k]^t$, define $\vect = \vect[j_1,\ldots,j_t] \in\I$
to be the vector that is 0 in all coordinates except:
\begin{align*}
\vect({1,j_1}) &= \angbrack{\ } = 1,\\
\vect({2,j_2}) &= \angbrack{j_1} = j_1,\\
\hcm[.5]\cdots\hcm[.5] \\
\vect({r,j_r}) &= \angbrack{j_1,\ldots,j_{r-1}}\\
\hcm[.5]\cdots\hcm[.5]\\
\vect({t,j_t}) &= \angbrack{j_1,\ldots,j_{t-1}}.
\end{align*}

Define $\S$ to be the set of eligible vectors,
\[
\S=\{\vect[j_1,\dots,j_t] \mid (j_1,\dots,j_t)\in[k]^t\}.
\]
Letting $n=\abs{\I}$, $A_t$ is an $n\times n$ 0--1 matrix whose row and column sets are both indexed by $\I$, ordered lexicographically.
It is defined as follows:
\[
A_t(a,b) = \left\{\begin{array}{l@{\hcm}l}
1           & \mbox{if $b-a\in \S$,}\\
0           & \mbox{otherwise.}\\
\end{array}\right.
\]

\begin{lemma}\label{lem:density}
$\|A_t\|_1 = \Theta(n(\log n/\log\log n)^t)$.
\end{lemma}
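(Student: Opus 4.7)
\medskip

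The plan is to count $\|A_t\|_1$ by summing over all possible difference vectors. Since $A_t(a,b)=1$ iff $b-a\in\S$, we have
\[
\|A_t\|_1=\sum_{v\in\S}N_v,\quad\text{where }N_v=|\{a\in\I:a+v\in\I\}|.
\]
So the proof breaks into three easy steps: (i) compute $|\S|$; (ii) show $N_v=\Theta(n)$ for every $v\in\S$; (iii) translate the resulting bound $\|A_t\|_1=\Theta(nk^t)$ into the form claimed in the lemma.

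First I would observe that the map $(j_1,\ldots,j_t)\mapsto\vect[j_1,\ldots,j_t]$ is injective: reading off the unique non-zero coordinate inside block $p$ recovers $j_p$. Hence $|\S|=k^t$.

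Next, I would estimate $N_v$ for a fixed $v=\vect[j_1,\ldots,j_t]$. The vector $v$ is zero outside the $t$ coordinates $(p,j_p)$ for $p\in[t]$, where it takes the value $\angbrack{j_1,\ldots,j_{p-1}}\in[k^{p-1}]\subseteq[k^{t-1}]$. The condition $a+v\in\I$ is a constraint only in those $t$ coordinates: in a non-zero coordinate with value $c\le k^{t-1}$, the number of admissible values of $a(p,j_p)\in[k^t]$ is exactly $k^t-c$, which lies between $k^t-k^{t-1}=k^{t-1}(k-1)$ and $k^t$. All remaining $tk-t$ coordinates of $a$ are unconstrained, each contributing a factor $k^t$. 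Therefore
\[
\bigl(1-\tfrac{1}{k}\bigr)^t\,n\;\le\;N_v\;\le\;n,
\]
and summing over the $k^t$ vectors in $\S$ gives $\|A_t\|_1=\Theta(nk^t)$, using that $(1-1/k)^t\to 1$ (or is bounded below by a positive constant for $k\ge 2t$, which is the regime of interest).

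Finally, I would convert $nk^t$ into the form stated. Since $n=|\I|=(k^t)^{tk}=k^{t^2k}$, we have $\log n=t^2k\log k$ and $\log\log n=\log k+O(\log\log k)$ (for fixed $t$), so $\log n/\log\log n\sim t^2k$ and hence $(\log n/\log\log n)^t=\Theta(k^t)$. Substituting yields $\|A_t\|_1=\Theta(n(\log n/\log\log n)^t)$. There is really no ``hard part'' here — the only place any care is needed is in verifying that the boundary loss from insisting $a+v\in\I$ (rather than working on a torus) costs only a constant factor, which is why it is important that each entry of $v$ is at most $k^{t-1}$, a factor of $k$ smaller than the alphabet size $k^t$ for each coordinate.
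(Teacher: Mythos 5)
Your proof is correct and takes essentially the same approach as the paper's: the paper phrases the count probabilistically (picking a uniformly random $a\in\I$ and $\vect\in\S$ and lower-bounding $\Pr(a+\vect\in\I)$), while you count $N_v$ directly coordinate-by-coordinate, but the underlying computation and the key observation — that each non-zero entry of $\vect\in\S$ is at most $k^{t-1}$, a factor $k$ smaller than the per-coordinate alphabet size $k^t$, so the boundary loss is only a constant factor — are identical.
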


\begin{proof}
$A_t$ is an $n\times n$ 0--1 matrix, where $n=k^{t^2k}=|\I|$.
Pick a uniformly random row $a\in \I$, and a uniformly 
random vector $\vect=\vect[i_1,\ldots,i_t] \in \S$.  
The probability that $a+\vect\in \I$ is legal
is the probability that for all $r\in[t]$, 
$a(r,i_r) + \angbrack{i_1,\ldots,i_{r-1}} \leq k^t$, 
which is at least $1-k^{-t+(r-1)}$ 
since $\angbrack{i_1,\ldots,i_{r-1}}\leq k^{r-1}$.
We have
\[
\Pr(a+\vect\in\I)\geq \prod_{r=1}^{t}(1-k^{-t+(r-1)}) 
\ge 1 -\sum_{r=1}^t k^{-r} > 1 - (k-1)^{-1}.
\]
Therefore the number of 1s in $A_t$ is at least
$(1-(k-1)^{-1})nk^t = \Theta(n(\log n/\log\log n)^t)$.
\end{proof}

\section{Forbidden Substructures}\label{sect:forbidden-substructures}

If $a,b\in \I$ are distinct vectors, their \emph{type} 
is the first block where they differ, i.e., 
\[
\type(a,b) = \min\{r \mid a(r)\neq b(r)\}.
\]

\begin{lemma}\label{lem:properties}
\begin{enumerate}
\item
For $a<b<c$, $a,b,c\in\I$, $\type(a,c)\leq \type(b,c)$.

\item  
    Suppose $A_t(a,c)=A_t(b,c)=1$, with $a<b$. 
    Let $c-a=\vect[i_1,\ldots,i_t]$ 
    and $c-b=\vect[j_1,\ldots,j_t]$. 
    If $\type(a,b)=r$, then  $i_q=j_q$ for $q<r$, $i_r<j_r$, 
    and the first coordinate where $a$ and $b$ differ is $(r,i_r)$.
\[
\begin{array}{c}
a\\
b
\end{array}
\left(\begin{array}{c}
\zero{\rb{3.5}{c}}\bu\\
\bu
\end{array}\right)
\istrut{7}
\]

\item
Suppose $A_t(a,c)=A_t(a,d)=1$, with $c<d$. Let $c-a=\vect[i_1,\ldots,i_t]$ and $d-a=\vect[j_1,\ldots,j_t]$. If $\type(c,d)=r$, then 
$i_q=j_q$ for $q<r$, $i_r>j_r$, 
and the first coordinate where $c$ and $d$ differ is $(r,j_r)$.
\[
\begin{array}{c}
a
\end{array}
\left(\begin{array}{cc}
\zero{\rb{3.5}{c}}\bu&\zero{\rb{3.5}{d}}\bu
\end{array}\right)
\istrut{7}
\]

\item
Suppose $A_t(b,c_1)=A_t(a,c_2)=A_t(b,d)=A_t(a,d)=1$,
where $a<b$ and $c_1<c_2<d$.
Then it is not possible that $\type(a,b)=\type(c_1,d)=\type(c_2,d)$.
\[
\begin{array}{c}
a\\
b
\end{array}
\left(\begin{array}{ccc}
\zero{\hcm[-.1]\rb{3.5}{$c_1$}}&\zero{\rb{3.5}{$c_2$}}\bu&\zero{\rb{3.5}{d}}\bu\\
\bu&&\bu
\end{array}\right)
\istrut{7}
\]

\item
Suppose $A_t(a,c_0)=A_t(b,c_1)=A_t(a,c_2)=A_t(b,d)=A_t(a,d)=1$,
where $a<b$ and $c_0<c_1<c_2<d$.
If $\type(a,b)\le\type(c_0,d)$, 
then $\type(c_0,d)<\type(c_2,d)$. 
\[
\begin{array}{c}
a\\
b
\end{array}
\left(\begin{array}{cccc}
\zero{\rb{3.5}{$c_0$}}\bu&\zero{\hcm[-.1]\rb{3.5}{$c_1$}}&\zero{\rb{3.5}{$c_2$}}\bu&\zero{\rb{3.5}{d}}\bu\\
&\bu&&\bu
\end{array}\right)
\istrut{7}
\]
\end{enumerate}
\end{lemma}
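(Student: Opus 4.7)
The plan is to verify each of the five items individually, exploiting the very rigid algebraic shape of vectors in $\S$: every $\vect[i_1,\ldots,i_t]\in\S$ has support exactly $\{(q,i_q):q\in[t]\}$, with value $\angbrack{i_1,\ldots,i_{q-1}}\ge1$ at coordinate $(q,i_q)$, and the encoding $\angbrack{\cdot}$ is injective. Part 1 is a short case analysis on $r'=\type(a,b)$ relative to $r=\type(b,c)$: if $r'<r$ then $a(r')\ne b(r')=c(r')$; if $r'=r$ then $a,b,c$ all agree on blocks before $r$ and the lex order $a<b<c$ forces $a(r)<c(r)$; if $r'>r$ then $a(r)=b(r)\ne c(r)$. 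In each case $\type(a,c)\le r$. For Part 2 I write $c-a=\vect[i]$ and $c-b=\vect[j]$. On each block $q<r$, the equality $a(q)=b(q)$ forces the single-support block-$q$ vectors $\vect[i](q)$ and $\vect[j](q)$ to coincide, so $i_q=j_q$, whence $V:=\angbrack{i_1,\ldots,i_{r-1}}=\angbrack{j_1,\ldots,j_{r-1}}$ is unambiguous. In block $r$, $b(r)-a(r)=V(e_{i_r}-e_{j_r})$; since $V\ge1$ and $a<b$, the comparison at the smaller of $i_r,j_r$ forces $i_r<j_r$ and identifies $(r,i_r)$ as the first coordinate where $a$ and $b$ differ. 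Part 3 is the dual statement, proved by the same argument with rows and columns swapped.

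For Part 4, set $\vect[I]=d-a$, $\vect[J]=d-b$, $\vect[I']=c_2-a$, and $\vect[L]=c_1-b$. Parts 2 and 3 give $I_q=J_q=I'_q=L_q$ for $q<r$, a common value $V=\angbrack{I_1,\ldots,I_{r-1}}$, and $I_r<J_r$, $I'_r>I_r$, $L_r>J_r$. Expanding
\[
c_1-c_2=(b-a)+\vect[L]-\vect[I']=\vect[I]-\vect[J]+\vect[L]-\vect[I'],
\]
all block-$q$ contributions vanish for $q<r$, while block $r$ equals $V(e_{I_r}-e_{J_r}+e_{L_r}-e_{I'_r})$. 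Each of $J_r,I'_r,L_r$ strictly exceeds $I_r$, so $(r,I_r)$ is the smallest element of the support, where $c_1-c_2=V>0$. Hence $c_1>c_2$, contradicting the assumed $c_1<c_2$.

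For Part 5, assume for contradiction that $\type(c_0,d)=\type(c_2,d)$, call this common value $s$, and note that Part 1 applied twice to $c_0<c_1<c_2<d$ forces $\type(c_1,d)=s$ as well. Let $r=\type(a,b)\le s$. If $r=s$, Part 4 applied to the sub-configuration $(b,c_1),(a,c_2),(b,d),(a,d)$ directly forbids $\type(a,b)=\type(c_1,d)=\type(c_2,d)$, a contradiction. In the harder case $r<s$, I introduce $\vect[K]=c_0-a$ alongside the four vectors above. Parts 2 and 3 yield $K_q=I'_q=I_q$ and $L_q=J_q$ for $q<s$, together with $K_s,I'_s>I_s$ and $L_s>J_s$. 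The identity $b-a=\vect[I]-\vect[J]$ produces a block-by-block cancellation showing $c_0(q)=c_1(q)$ for $q<s$, so block $s$ is where they first differ, and
\[
c_1(s)-c_0(s)=V^I_s\,e_{I_s}-V^J_s\,e_{J_s}+V^J_s\,e_{L_s}-V^I_s\,e_{K_s},
\]
writing $V^X_s:=\angbrack{X_1,\ldots,X_{s-1}}$.

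I then split on the order of $I_s$ and $J_s$. If $J_s<I_s$, the smallest support coordinate is $(s,J_s)$, where $c_1-c_0=-V^J_s<0$, contradicting $c_0<c_1$. If $J_s>I_s$, the smallest is $(s,I_s)$; there $c_1-c_0=V^I_s>0$ is consistent, but the analogous expansion of $c_1(s)-c_2(s)$ also yields $+V^I_s>0$ at $(s,I_s)$, contradicting $c_1<c_2$. If $J_s=I_s$, the coefficient at $(s,I_s)$ collapses to $V^I_s-V^J_s$, which is nonzero because $(I_1,\ldots,I_{s-1})\ne(J_1,\ldots,J_{s-1})$ (they differ at position $r<s$) and $\angbrack{\cdot}$ is injective; whichever its sign, one of $c_0<c_1$ or $c_1<c_2$ fails. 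The main obstacle is precisely this accounting in Part 5 with $r<s$: since $\angbrack{\cdot}$ is only assumed injective, $V^I_s$ and $V^J_s$ are not a priori sign-comparable, and the argument must route through whichever pairwise inequality among $c_0<c_1$, $c_1<c_2$ is actually violated in each subcase.
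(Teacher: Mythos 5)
Your proof is correct. Parts 1--3 track the paper's proof. In Parts 4 and 5 you take a more directly algebraic route: rather than reasoning from the ``first coordinate where $c$ and $d$ differ'' conclusions of Parts 2--3 as the paper does, you expand $c_1-c_2$ (Part 4) and both $c_1-c_0$, $c_1-c_2$ (Part 5) as signed sums of four vectors from $\S$, cancel the early blocks, and read the contradiction off the sign at the smallest supported coordinate of the decisive block. The paper's Part 5 first pins down $i_s=j_s$ from the ordering $c_0<c_1<c_2$ before comparing all three columns at that single coordinate; your version instead dispatches the three subcases $I_s<J_s$, $I_s>J_s$, $I_s=J_s$ directly, the first two contradicting $c_0<c_1$ or $c_1<c_2$ via the sign of the leading coefficient and the third reducing to the nonvanishing of $V^I_s-V^J_s$. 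Both routes hinge on the same crucial fact, that $V^I_s\ne V^J_s$ when $I_s=J_s$, which you justify correctly from Part 2 (giving $I_r<J_r$ at $r=\type(a,b)<s$) and the injectivity of $\angbrack{\cdot}$ --- exactly the two facts the paper invokes at the corresponding step. Your packaging buys a uniform ``expand, cancel, locate minimal support'' treatment of Parts 4 and 5, at the cost of slightly more explicit casework in Part 5.
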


\begin{proof}
\underline{Part 1}
holds because $\I$ is ordered lexicographically.

\underline{Part 2.}
Let $s$ be the first index where $i_s\ne j_s$. Clearly, the first two coordinates where $\vect[i_1,\dots,i_t]$ and $\vect[j_1,\dots,j_t]$ differ is $(s,i_s)$ and $(s,j_s)$. This makes $a=c-\vect[i_1,\dots,i_t]$ and $b=c-\vect[j_1,\dots,j_t]$ also differ first at the same two coordinates, making $\type(a,b)=s$, so we have $r=s$. At coordinate $(s,j_s)$, $\vect[j_1,\dots,j_t]$ is $\langle j_1,\dots,j_{s-1}\rangle>0$ but $\vect[i_1,\dots,i_t]$ is zero there, so we have $a(s,j_s)>b(s,j_s)$. As $a<b$, $(s,j_s)$ cannot be the first coordinate where $a$ and $b$ differ, so we conclude that $i_s<j_s$ and $(s,i_s)=(r,i_r)$ is the coordinate of first difference between $a$ and $b$.

\underline{Part 3} can be proved the same way as Part~2.

\underline{Part 4.} 
Suppose, for the purpose of obtaining a contradiction,
that $\type(a,b)=\type(c_1,d)=\type(c_2,d)=r$. 
Both vectors $d-a$ and $d-b$ are in $\S$, so let
$$d-a=\vect[i_1,\dots,i_t],$$
$$d-b=\vect[j_1,\dots,j_t].$$
Applying Part~3 to row $b$, we know 
$c_1$ and $d$ first differ at coordinate $(r,j_r)$,
and applying Part~3 to row $a$, we know
$c_2$ and $d$ first differ at coordinate $(r,i_r)$. 
Applying Part~2 to column $d$, 
we have $i_r<j_r$.  However, since $c_2<d$ we have
\[
c_1(r,i_r) = d(r,i_r) > c_2(r,i_r),
\]
implying $c_1>c_2$, contradicting the originally defined order $c_1<c_2$.

\underline{Part 5.}
We have $\type(c_0,d)\le\type(c_2,d)$ by Part~1. Suppose, for the purpose of obtaining a contradiction, that
$\type(c_0,d) = \type(c_2,d)=r$, 
which implies, again by Part~1, that $\type(c_1,d)=r$ as well.
We have assumed $\type(a,b)\le r$ and $\type(a,b)=r$ would contradict Part~4, so we have $\type(a,b)<r$. 
With the notation introduced in Part 4, 
both pairs $(c_0,d)$ and $(c_2,d)$ first differ 
at coordinate $(r,i_r)$, while $(c_1,d)$ first differ at coordinate $(r,j_r)$. 
We have $c_0<c_1<c_2<d$ lexicographically, 
so we must also have $i_r=j_r$.
We further know that
\begin{align*}
d(r,i_r)-c_0(r,i_r)=d(r,i_r)-c_2(r,i_r)
&=\vect[i_1,\dots,i_t](r,i_r)=\langle i_1,\dots,i_{r-1}\rangle,\\
d(r,j_r)-c_1(r,j_r)&=\vect[j_1,\dots,j_t](r,j_r)=\langle j_1,\dots,j_{r-1}\rangle.
\intertext{This implies}
c_0(r,i_r) = c_2(r,i_r) &\neq c_1(r,i_r)
\end{align*}
since Part~2 and $\type(a,b)<r$ 
implies that $\langle i_1,\dots,i_{r-1}\rangle\ne\langle j_1,\dots,j_{r-1}\rangle$.
Now $(r,i_r)$ is the first coordinate where $c_1$ differs from either $c_0$ or $c_2$, but $c_0$ and $c_2$ agree on this coordinate,
which contradicts the ordering $c_0<c_1<c_2$. 
The confirms Part~5 of the lemma.
\end{proof}
Define the alternating patterns $P_t$ and $Q_t$,
where $Q_t$ is a reflection of $P_t$ across 
the minor diagonal.
\begin{align*}
    P_t &= \left(\begin{array}{ccccccc}
\zero{\hcm[-.2]\rb{4}{$\overbrace{\hcm[3.6]}^{\text{$2t-1$ alternating 1s}}$}}
\bu &&\bu&&&\bu&\bu\\
&\bu&&\rb{2.4}{$\cdots$}&\bu&&\bu
\end{array}\right)
&
Q_t &= \left(\begin{array}{cc}
\bu&\bu\\
&\bu\\
\bu&\\
\zero{\hcm[.25]$\vdots$}\\
&\bu\\
\bu&\\
&\bu
\end{array}\right)
\end{align*}

When $t\geq 2$, 
$P_{t}$ and $Q_{t}$ appear in $A_t$, 
and in fact $P_{t'},Q_{t'}$ appear in 
$A_t$ for \emph{every} constant $t'\geq t$.
Lemmas~\ref{lem:P} and \ref{lem:Q}
give useful constraints on \emph{how} 
$P_t,Q_t$ can be embedded in $A_t$.

\begin{lemma}\label{lem:P}
Consider an occurrence of 
$P_t$ in $A_t$, 
where $a,b\in \I$ are the indices of the two rows
and $c,d\in \I$ are the indices of the first and last columns.
\[
\begin{array}{c}
a\\
b
\end{array}
\left(\begin{array}{ccccccc}
\zero{\rb{3}{c}}\bu &&\bu&&&\bu&\zero{\rb{3}{d}}\bu\\
&\bu&&\rb{2.4}{$\cdots$}&\bu&&\bu
\end{array}\right)
\istrut{7}
\]
If $\type(a,b)\le\type(c,d)$, then $\type(a,b)=\type(c,d)=1$.
\end{lemma}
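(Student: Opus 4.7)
The plan is to iteratively apply Part~5 of Lemma~\ref{lem:properties} along the alternating structure of the embedded $P_t$. Label the $2t$ columns of the embedded copy of $P_t$ as $c = c_1 < c_2 < \cdots < c_{2t} = d$, so that row $a$ carries a 1 at the odd-indexed columns $c_1, c_3, \ldots, c_{2t-1}$ together with $c_{2t}$, while row $b$ carries a 1 at the even-indexed columns $c_2, c_4, \ldots, c_{2t-2}$ together with $c_{2t}$.

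For each $i \in \{1, \ldots, t-1\}$, the five 1-entries $A_t(a, c_{2i-1})$, $A_t(b, c_{2i})$, $A_t(a, c_{2i+1})$, $A_t(b, d)$, $A_t(a, d)$ exactly match the configuration required by Part~5, with the roles of $(c_0, c_1, c_2)$ there played by $(c_{2i-1}, c_{2i}, c_{2i+1})$ here, and $d = c_{2t}$ in both. I would argue by induction on $i$ that $\type(a,b) \le \type(c_{2i-1}, d)$ holds at every step, so that Part~5 applies and yields $\type(c_{2i-1}, d) < \type(c_{2i+1}, d)$. The base case $i=1$ is precisely the lemma's hypothesis $\type(a,b) \le \type(c, d)$, and the inductive step is immediate from the strict inequality produced at stage $i$.

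Chaining the $t-1$ inequalities produces the strictly increasing sequence
\[
\type(c_1, d) < \type(c_3, d) < \cdots < \type(c_{2t-1}, d),
\]
consisting of $t$ values drawn from the range $\{1, 2, \ldots, t\}$ of $\type$ (since vectors in $\I$ are partitioned into exactly $t$ blocks). By pigeonhole these values must be precisely $1, 2, \ldots, t$ in order, so in particular $\type(c, d) = \type(c_1, d) = 1$; combined with the hypothesis $\type(a,b) \le \type(c,d)$ and the positivity of $\type$, this forces $\type(a,b) = 1$ as well.

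The only thing to watch is the bookkeeping in the induction: each application of Part~5 both consumes and re-establishes the hypothesis ``$\type(a,b) \le \type(c_{2i-1}, d)$'' for the next round, and one must verify that all five required 1-entries of Part~5 are actually present in the $P_t$-embedding at every stage (which follows from the explicit description of the top- and bottom-row 1-columns above). Beyond this indexing check, the argument reduces to observing that a strictly monotone chain of length $t$ inside a range of size $t$ must start at $1$.
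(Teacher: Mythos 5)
Your proof is correct and essentially the same as the paper's: both hinge on applying Lemma~\ref{lem:properties}(5) along consecutive triples of the alternating columns to produce a strictly increasing chain of $t$ type values, then invoking pigeonhole on the range $[t]$. The only cosmetic difference is that the paper first establishes the weak chain $\type(c_1,d)\le\cdots\le\type(c_t,d)$ via Part~1 and then strengthens each link to strict via Part~5, whereas you carry the hypothesis forward by induction; the substance is identical.
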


\begin{proof}
For $i\in[t]$, let $c_i\in\I$ be 
the index of the $(2i-1)^{\text{th}}$ column in this occurrence of $A_t$, so $c=c_1<c_2<\cdots<c_t<d$. We have
$\type(a,b)\le\type(c_1,d)\leq\cdots \leq \type(c_t,d)$, where the first inequality is assumed and rest follow from Lemma~\ref{lem:properties}(1). 
Part 5 of Lemma~\ref{lem:properties} applies and implies that these latter inequalities are \emph{strict}, i.e.,
\[
\type(a,b)\leq \type(c_1,d) < \type(c_2,d) < \cdots < \type(c_t,d).
\]
All types are from $[t]$, therefore both $\type(a,b)$ and $\type(c,d)=\type(c_1,d)$ must be 1.
\end{proof}

\begin{lemma}\label{lem:Q}
Consider an occurrence of $Q_t$ in $A_t$, 
where $a,b\in \I$ are the indices of the first and last rows and $c,d\in \I$ are the indices of the two columns. 
If $\type(a,b)\ge\type(c,d)$, then $\type(a,b)=\type(c,d)=1$.
\end{lemma}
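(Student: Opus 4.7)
The plan is to mirror Lemma~\ref{lem:P}'s proof with the roles of rows and columns swapped. Given the $Q_t$-occurrence with columns $c<d$ and rows $a = r_1 < r_2 < \cdots < r_{2t} = b$, for $i\in[t]$ let $a_i := r_{2i-1}$ denote the $c$-rows (with $A_t(a_i,c)=1$ and $a_1=a$) and $b_i := r_{2i}$ the $d$-rows (with $A_t(b_i,d)=1$ and $b_t=b$); row $a$ is the unique row of the occurrence that has 1s in both columns. Applying Lemma~\ref{lem:properties}(1) with $z=b$ fixed along $a_1 < a_2 < \cdots < a_t < b$ yields the non-strict chain
\[
\type(a_1,b) \leq \type(a_2,b) \leq \cdots \leq \type(a_t,b),
\]
mirroring $\type(c_1,d) \leq \cdots \leq \type(c_t,d)$ in Lemma~\ref{lem:P}.

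The heart of the argument is to promote these inequalities to strict ones. I would formulate and prove a row-dual of Lemma~\ref{lem:properties}(5): for the 5-ones configuration $A_t(\rho_0,c) = A_t(\rho_0,d) = A_t(\rho_1,d) = A_t(\rho_2,c) = A_t(\rho_3,d) = 1$ with $c<d$ and $\rho_0 < \rho_1 < \rho_2 < \rho_3$---precisely an embedded $Q_2$ with the both-columns row on top---the hypothesis $\type(c,d) \leq \type(\rho_0,\rho_3)$ forces $\type(\rho_0,\rho_3) < \type(\rho_2,\rho_3)$. The proof would parallel Part~5's: assume equal types for contradiction, use Part~1 to equalize the intermediate type, rule out the edge case via a row-dual of Part~4, and otherwise derive a lex-ordering contradiction by extracting first-differing coordinates via Part~2 on column~$d$ (playing the role Part~3 on row~$a$ played in Part~5's proof), Part~2 on column~$c$, and Part~3 on row~$\rho_0$. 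A useful auxiliary observation is that for any $a < b_j < b$ one has $\type(a,b_j) \geq \type(a,b)$ (if not, $b_j$ and $b$ would differ at block $\type(a,b_j) < \type(a,b)$, contradicting $\type(a,b) \leq \type(b_j,b)$ from Part~1); this propagates the hypothesis $\type(c,d) \leq \type(a,b)$ to $\type(c,d) \leq \type(a,b_j)$ for every $d$-row $b_j$. Iterating the dual Part~5 with $\rho_0 = a$ and $\rho_3$ ranging over $b, b_{t-1}, \ldots, b_2$, combined with Part~1, should yield the full strict chain $\type(a_1,b) < \cdots < \type(a_t,b)$ of $t$ distinct values in $[t]$, forcing $\type(a,b) = \type(a_1,b) = 1$ and, by the hypothesis $\type(a,b) \geq \type(c,d) \geq 1$, also $\type(c,d)=1$.

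The principal obstacle is the sign asymmetry between Parts~2 and~3 of Lemma~\ref{lem:properties}: Part~2 gives $i_r < j_r$ while Part~3 gives $i_r > j_r$, so the dual Part~5 is not a literal transposition of Part~5's proof---the coordinate-chase must be re-derived in the opposite orientation, carefully tracking which pair of indices appears first lexicographically and how it collides with the lex-order $\rho_1 < \rho_2 < \rho_3$. A secondary subtlety is extracting all $t-1$ strict inequalities within $\{\type(a_i,b)\}_{i=1}^{t}$ from applications of a dual Part~5 anchored only at $\rho_0 = a$ (the unique both-columns row of the occurrence); the bootstrap across successive references $\rho_3 \in \{b, b_{t-1}, \ldots, b_2\}$ sketched above is what lets a single anchor row generate the whole strict chain.
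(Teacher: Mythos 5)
Your row-dual of Part~5 has the wrong conclusion, and this is a genuine gap rather than a notational slip. Reflecting across the minor diagonal sends the \emph{last column} $d$ of $P_t$ (the only column with 1s in both rows) to the \emph{first row} $\rho_0=a$ of $Q_t$ (the only row with 1s in both columns), and sends the columns $c_1,\dots,c_t$ (with 1s in row $a$) to the rows $b_t,\dots,b_1$ (with 1s in column $d$). Hence the correct dual of Part~5's conclusion $\type(c_0,d)<\type(c_2,d)$ is $\type(\rho_0,\rho_3)<\type(\rho_0,\rho_1)$ --- both types are anchored at the double-column row $\rho_0$ --- not your $\type(\rho_0,\rho_3)<\type(\rho_2,\rho_3)$, which is anchored at the single-entry row $\rho_3$. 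Your version is simply false. Take $t=2$, $k=4$, $\rho_0=(5,5,5,5,5,5,5,5)$, $c=\rho_0+\vect[2,1]$, $d=\rho_0+\vect[1,1]$, $\rho_1=d-\vect[1,2]$, $\rho_2=c-\vect[2,2]$, $\rho_3=d-\vect[2,1]$. One checks $\rho_0<\rho_1<\rho_2<\rho_3$, $c<d$, all five required entries are 1, $\type(c,d)=\type(\rho_0,\rho_3)=1$ so the hypothesis holds with equality, and yet $\type(\rho_2,\rho_3)=1$ (while $\type(\rho_0,\rho_1)=2$, as the correct dual predicts).

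This propagates to the wrong chain: you want $\type(a_1,b)<\cdots<\type(a_t,b)$, anchored at $b$ and running over the $c$-rows, but the chain that follows from the correct dual is $\type(a,b_t)<\type(a,b_{t-1})<\cdots<\type(a,b_1)$, anchored at $a$ and running over the $d$-rows $b_j=r_{2j}$. Each step comes from the dual Part~5 with $(\rho_0,\rho_1,\rho_2,\rho_3)=(a,b_j,a_{j+1},b_{j+1})$, whose hypothesis $\type(c,d)\le\type(a,b_{j+1})$ you already correctly reduce to $\type(c,d)\le\type(a,b)$ via your auxiliary observation and Part~1. Since the chain has $t$ strictly decreasing values in $[t]$ terminating at $\type(a,b_t)=\type(a,b)$, this forces $\type(a,b)=1$ and then $\type(c,d)\le 1$. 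Even setting aside the false conclusion of your dual lemma, your iteration ``$\rho_0=a$ fixed, $\rho_3$ varying'' could at best produce inequalities $\type(a,\rho_3)<\type(\cdot,\rho_3)$ with varying right-hand anchors, which do not concatenate into the single chain $\type(a_1,b)<\cdots<\type(a_t,b)$ you claim; the only row you can place at $\rho_0$ is $a$, so you never obtain $\type(a_i,b)<\type(a_{i+1},b)$ for $i\ge 2$.
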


\begin{proof}
First one has to establish the analogues of Lemma~\ref{lem:properties}(4,5) for patterns that are reflected across the minor diagonal, depicted below, 
then follow the proof of Lemma~\ref{lem:P}. 
\[
\left(\begin{array}{cc}
\bu&\bu\\
    &\bu\\
\bu&
\end{array}\right)
\hcm[2]
\left(\begin{array}{cc}
\bu&\bu\\
    &\bu\\
\bu&\\
    &\bu
\end{array}\right)
\]
For both parts, the original proofs work \emph{mutatis mutandis}.
\end{proof}

Define $X_t$ to be the following $2t\times (2t+1)$ pattern.
\[
X_t = \left(\begin{array}{ccccccccc}
&\bu &&\bu&\cdots&\bu&&\bu&\bu\\
&&&&&&&&\bu\\
&\bu&&&&&&&\\
&&&&&&&&\bu\\
&\bu&&&&&&&\vdots\\
&\vdots&&&&&&&\bu\\
&\bu&&&&&&&\\
\bu&&\bu&\cdots&\bu&&\bu&&\bu
\end{array}\right)
\]

Alternatively, $X_t$ is defined to be the 0--1 matrix whose first and last rows with the first column removed form $P_t$, while its second and last column form $Q_t$ and has a single 1 entry outside these submatrices in the first column and last row.

\begin{lemma}\label{lem:avoids-X}
$A_t$ avoids $X_t$.
\end{lemma}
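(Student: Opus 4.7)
The plan is to suppose $X_t$ embeds in $A_t$ and derive a contradiction by combining what Lemmas~\ref{lem:P} and~\ref{lem:Q} say about the built-in $P_t$ and $Q_t$ subpatterns of $X_t$ with the lone extra 1 in $X_t$'s first column.

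Let the putative embedding send $X_t$ to rows $a_1<a_2<\cdots<a_{2t}$ and columns $b_0<b_1<\cdots<b_{2t}$, where $b_0$ corresponds to $X_t$'s first column (which hosts only the 1 at $(a_{2t},b_0)$) and $b_1,\dots,b_{2t}$ correspond to $X_t$'s remaining columns $2,\dots,2t+1$. By the structural decomposition of $X_t$ already recorded in the paper, the embedding contains (i) a $P_t$ copy on rows $a_1,a_{2t}$ and columns $b_1,\dots,b_{2t}$, with $b_1,b_{2t}$ playing the roles of $c,d$, and (ii) a $Q_t$ copy on columns $b_1,b_{2t}$ and rows $a_1,\dots,a_{2t}$, with $a_1,a_{2t}$ playing the roles of $a,b$. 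At least one of $\type(a_1,a_{2t})\leq\type(b_1,b_{2t})$ or $\type(a_1,a_{2t})\geq\type(b_1,b_{2t})$ must hold; the first triggers Lemma~\ref{lem:P} and the second triggers Lemma~\ref{lem:Q}. Either way, both types are forced to be $1$.

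Next, Lemma~\ref{lem:properties}(1) applied to $b_0<b_1<b_{2t}$ gives $\type(b_0,b_{2t})\leq\type(b_1,b_{2t})=1$, so $\type(b_0,b_{2t})=1$ as well. The four entries $(a_{2t},b_0)$, $(a_1,b_1)$, $(a_{2t},b_{2t})$, $(a_1,b_{2t})$ are all $1$ in the embedding and exactly match the hypothesis of Lemma~\ref{lem:properties}(4) with $a=a_1$, $b=a_{2t}$, $c_1=b_0$, $c_2=b_1$, $d=b_{2t}$. That part forbids the simultaneous equalities $\type(a_1,a_{2t})=\type(b_0,b_{2t})=\type(b_1,b_{2t})$, but we have just shown all three equal $1$---the desired contradiction.

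The step that requires insight is seeing how the lone extra 1 of $X_t$ can be put to work: by itself it supplies only $A_t(a_{2t},b_0)=1$, but together with the 1s at $(a_1,b_1)$, $(a_1,b_{2t})$, $(a_{2t},b_{2t})$ it creates exactly the four-1 configuration to which Part~4 of Lemma~\ref{lem:properties} applies, and Part~4 becomes decisive precisely because the $P_t$ and $Q_t$ copies have already pinned the three relevant types to $1$. Everything else---the identification of the $P_t$ and $Q_t$ subpatterns, the monotonicity step from Part~1, and the hypothesis check for Part~4---is routine.
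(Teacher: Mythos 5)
Your proposal is correct and follows essentially the same route as the paper's own proof: identify the embedded $P_t$ and $Q_t$ to pin $\type(a_1,a_{2t})=\type(b_1,b_{2t})=1$ via Lemmas~\ref{lem:P} and~\ref{lem:Q}, extend to $\type(b_0,b_{2t})=1$ by Lemma~\ref{lem:properties}(1), and contradict Lemma~\ref{lem:properties}(4) using the extra 1 at $(a_{2t},b_0)$. The only difference is notational (you write $a_1,a_{2t},b_0,b_1,b_{2t}$ where the paper writes $a,b,c',c,d$).
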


\begin{proof}
    Suppose there is an occurrence of $X_t$ in $A_t$.  
    Let $a,b\in\I$ be the indices of the first 
    and last rows of the $X_t$ instance,
    and let $c',c,d\in\I$ be the indices of its first, second 
    and last columns. We either have $\type(a,b)\le\type(c,d)$ or $\type(a,b)\ge\type(c,d)$. We must have $\type(a,b)=\type(c,d)=1$ in both cases by Lemmas~\ref{lem:P} and \ref{lem:Q}, respectively. But then $\type(c',d)$ is also 1 by Lemma~\ref{lem:properties}(1) and then the rows $a,b$ and columns $c',c,d$ contradict Lemma~\ref{lem:properties}(4). The contradiction proves our lemma.
\end{proof}

Lemma~\ref{lem:avoids-X} lets us obtain a lower bound on 
$\Ex(X_t,n)$.  We use a Lemma of Pach and Tardos~\cite{PachTardos06} to get a nearly matching upper bound.

\begin{lemma}[Lemmas~3 of~\cite{PachTardos06}]\label{lem:PachTardos}
Let $P$ be a 0--1 pattern with rows $i_0,i_1$ and a column $j$ such that $P(i_0,j)=1$ is the only
1 in column $j$, and 
$P(i_0,j+1)=P(i_1,j-1)=P(i_1,j+1)=1$.
\smallskip

\[
P = 
\scalebox{.8}{$\begin{array}{r}
\ \\
i_0\\
\ \\
i_1\\
\ 
\end{array}
\left(\begin{array}{@{\hcm[1]}ccc@{\hcm[1]}}
&&\\
\zero{\hcm[-.5]\rb{8}{$j-1$}}{}&\zero{\hcm[.1]\rb{8}{$j$}}\bu&\zero{\hcm[-.05]\rb{8}{$j+1$}}\bu\\
& & \\
\bu& &\bu \\
&&
\end{array}
\right)$}
\]
Let $P'$ be $P$ with column $j$ removed. Then
$\Ex(P,n) = O(\Ex(P',n)\cdot\log n)$.
\end{lemma}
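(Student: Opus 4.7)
My plan is a column-halving argument whose central step is a lifting lemma: after pairing the columns of $A$ and forming the heavy-pair contraction $A_H$, the matrix $A_H$ avoids $P'$.

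Let $A$ be an $n\times n$ 0--1 matrix avoiding $P$. Pair its columns into consecutive pairs $(2p-1,2p)$. For each pair $p$ and row $r$, call the pair \emph{heavy at $r$} if $A(r,2p-1)=A(r,2p)=1$, and let $A_H$ be the $n\times \ceil{n/2}$ matrix with $A_H(r,p)=1$ iff pair $p$ is heavy at row $r$.

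The crucial structural claim is that $A_H$ avoids $P'$. Suppose for contradiction that $A_H$ contains $P'$ under an embedding $(\rho,\sigma)$. I would lift this to an embedding of $P$ into $A$ with the same row map $\rho$ and column map $\tau$ defined by $\tau(k)=2\sigma(k)-1$ for $k<j$, $\tau(k)=2\sigma(k-1)-1$ for $k>j+1$, and $\tau(j)=2\sigma(j)-1$, $\tau(j+1)=2\sigma(j)$. Strict monotonicity of $\tau$ follows from that of $\sigma$, since $2\sigma(j-1)-1<2\sigma(j)-1<2\sigma(j)<2\sigma(j+1)-1$. The required $1$-entries all hold because whenever $P'(i,k')=1$, pair $\sigma(k')$ is heavy at $\rho(i)$, placing $1$'s in \emph{both} $A(\rho(i),2\sigma(k')-1)$ and $A(\rho(i),2\sigma(k'))$. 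In particular, pair $\sigma(j)$ is heavy at both $\rho(i_0)$ and $\rho(i_1)$: the left column $2\sigma(j)-1$ supplies the lone $1$-entry required by column $j$ of $P$ (namely $P(i_0,j)=1$), while the right column $2\sigma(j)$ simultaneously supplies both $1$-entries required by column $j+1$ of $P$ (namely $P(i_0,j+1)=P(i_1,j+1)=1$). This contradicts $A$ avoiding $P$, so $\|A_H\|_1\le \Ex(P',n,\ceil{n/2})\le \Ex(P',n)$.

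With the lifting step in hand, the desired bound $\Ex(P,n)=O(\Ex(P',n)\log n)$ follows from a standard column-halving recursion in the spirit of Pach and Tardos~\cite{PachTardos06}: writing $\|A\|_1=2\|A_H\|_1+L$ where $L$ counts the light $1$-entries, and recursively controlling $L$ by restricting $A$ to its odd-indexed and even-indexed columns (each is an $n\times\ceil{n/2}$ submatrix that still avoids $P$), one obtains a recurrence whose standard induction accumulates a $\log n$ factor, giving the stated bound.

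The main technical obstacle is the lifting step itself, specifically the subtle "splitting" of pair $\sigma(j)$ of $A_H$ into two adjacent columns of $A$ that play the roles of columns $j$ and $j+1$ of $P$. This maneuver works precisely because column $j$ of $P$ carries only a single $1$-entry (at row $i_0$) and pair $\sigma(j)$ is heavy at $\rho(i_0)$: the left column $2\sigma(j)-1$ can be dedicated to column $j$ of $P$ while the right column $2\sigma(j)$ remains available for column $j+1$. The same scheme would fail if column $j$ of $P$ carried any additional $1$-entry in a row for which pair $\sigma(j)$ happens to be only light rather than heavy, and this is exactly where the hypothesis on $P$ is used.
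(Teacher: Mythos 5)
Your proposal cites a statement that the paper itself does not prove (it imports it as Lemma~3 of Pach--Tardos), so the only basis for comparison is correctness; on that basis your argument has a genuine gap.

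The lifting step is correct, and it is nicely argued: if $A_H$ contains $P'$ with row map $\rho$ and column map $\sigma$, your column map $\tau$ is strictly increasing and every required $1$-entry of $P$ is supplied, because a heavy pair yields $1$'s in \emph{both} of its columns. But notice that this step uses only two of the three hypotheses on $P$, namely that column $j$ has a single $1$-entry (at row $i_0$) and that $P(i_0,j+1)=1$. You never use $P(i_1,j-1)=P(i_1,j+1)=1$, and you even say explicitly that the singleton hypothesis ``is exactly where the hypothesis on $P$ is used.'' That should be a red flag: the lemma as stated certainly does need some further hypothesis beyond ``column $j$ is a singleton dominated by column $j+1$,'' and a proof that never touches the $i_1$-condition cannot be complete.

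The real gap is in the recursion, which is described only in outline and, as described, does not close. You write $\|A\|_1 = 2\|A_H\|_1 + L$ and propose to bound $L$ ``by restricting $A$ to its odd-indexed and even-indexed columns (each still avoids $P$).'' But the odd- and even-column submatrices $A_o$ and $A_e$ each contain \emph{all} of their $1$'s, heavy and light alike; one has $\|A_o\|_1+\|A_e\|_1=\|A\|_1=2\|A_H\|_1+L$, so bounding $L$ through $A_o,A_e$ just reproduces the identity and gives at best the vacuous recurrence $f(m)\le 2f(\lceil m/2\rceil)$, which yields nothing better than $O(nm)$. Nor is there an obvious direct bound on $L$: the natural candidate, the ``light-pair'' contraction $A_L$ with $A_L(r,p)=1$ iff pair $p$ has exactly one $1$ in row $r$, need not avoid $P$ --- an occurrence in $A_L$ cannot in general be lifted to $A$ because the lone $1$ may sit in the odd half for one required row and in the even half for another, and the single-$1$-column hypothesis gives no control over those other columns of $P$. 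Pach and Tardos's actual argument is more delicate precisely here, and the $i_1$-condition is what makes the light $1$'s tractable. In short: the lifting lemma is a good and correct ingredient, but the recursion as you have sketched it is circular, and without the $i_1$-hypothesis you cannot expect to repair it.
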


Since the extremal function of a pattern is invariant under
rotations and reflections, Lemma~\ref{lem:PachTardos} also
applies with the roles of rows and columns reversed, and the the roles of 
$j-1$ and $j+1$ reversed.

\medskip 

\begin{proof}[Proof of Theorem~\ref{thm:main}]
By Lemmas~\ref{lem:density} and \ref{lem:avoids-X}, 
$A_t$ has weight $\Theta(n(\log n/\log\log n)^t)$
and avoids $X_t$, giving the lower bound.
For the upper bound, 
we can apply Lemma~\ref{lem:PachTardos} 
iteratively to remove rows $2,3,\dots,2t-1$ followed by
columns $2t,2t-1,\dots,2$, leaving a linear pattern with three 1s.
Each application of Lemma~\ref{lem:PachTardos} introduces 
a $\log n$ factor, 
so $\Ex(X_t,n)=O(n\log^{\|X_t\|_1-3}n)=O(n\log^{4t-3} n)$.
\end{proof}

\section{Conclusion and Open Problems}\label{sect:conclusion}

The following broad classification of 0--1 patterns
comes out of the last 30+ years of 
forbidden 0--1 matrix theory~\cite{BienstockG91,FurediH92,Klazar92,KV94,MarcusT04,Tardos05,PachTardos06,Keszegh09,Fulek09,Geneson09,Pettie-FH11,Pettie-GenDS11,Pettie-SoCG11,Timmons12,Fox13,Pettie15-SIDMA,CibulkaK17,WellmanP18,GyoriKMTTV18,KorandiTTW19,FurediKMV20,MethukuT22,GerbnerMNPTV23,KucheriyaT23,KucheriyaT23b}.

\begin{description}
    \item[Linear Patterns.] 
        Let $\mathcal{P}_{\operatorname{lin}}$ be the set of all $P$ such that $\Ex(P,n)=O(n)$. $\mathcal{P}_{\operatorname{lin}}$ 
        contains several well-structured classes of patterns such as permutations~\cite{MarcusT04}, ``double'' permutations~\cite{Geneson09}, and monotone patterns~\cite{Pettie-SoCG11,Keszegh09,KV94}. 
        Examples of the last two classes are
        \begin{align*}
            \scalebox{.6}{$\left(
            \begin{array}{cccccccc}
            &&&&&&\bu&\bu\\
            &&\bu&\bu&&&&\\
            &&&&\bu&\bu&&\\
            \bu&\bu&&&&&&
            \end{array}
            \right)$}
            \hcm[2]
            \scalebox{.6}{$\left(
            \begin{array}{cccccccccccccc}
            &&&&&&\bu&\bu&&&&&\\
            &&&&\bu&\bu&&&\bu&\bu&&&\\
            &&\bu&\bu&&&&&&&\bu&\bu&&\\
            \bu&\bu&&&&&&&&&&&\bu&\bu\\
            \end{array}
            \right)$}
        \end{align*}
        There are only a handful of known linear patterns outside these classes.
        The first two examples 
        below are proved via ad hoc arguments~\cite{Fulek09,Pettie-GenDS11},
        and the last is an example of the ``grafting'' operation~\cite{Pettie-SoCG11} applied to a linear pattern~\cite{Tardos05}.
        \begin{align*}
            \scalebox{.6}{$\left(
            \begin{array}{ccccccc}
            &&&&&\bu&\\
            &\bu&\bu&&&&\\
            &&&\bu&\bu&&\bu\\
            \bu&&&&
            \end{array}
            \right)$}
\hcm[1]
             \scalebox{.6}{$\left(
            \begin{array}{ccccc}
            &&\bu&&\\
            \bu&&&&\bu\\
            &\bu&\bu&\bu&
            \end{array}
            \right)$}
\hcm[1]
            \scalebox{.6}{$\left(\begin{array}{ccccccc}
            &&&\bu\\
            &&\bu\\
            &&\bu\\
            \bu\\
            &\bu&\bu&&\bu&\bu\\
            &&&&&&\bu\\
            &&\bu
\end{array}\right)$}
        \end{align*}
        A difficult open problem is to characterize the class $\mathcal{P}_{\operatorname{lin}}$.  It is known that 
        there are infinitely many minimal\footnote{with respect to $\prec$ containment}
        nonlinear patterns~\cite{Keszegh09,Geneson09,Pettie-FH11}.
        On the other hand, every known $P$ for which 
        $P\not\in \mathcal{P}_{\operatorname{lin}}$
        is witnessed by one of two constructions~\cite{HS86,FurediH92} 
        with weight $\Theta(n\alpha(n))$ and $\Theta(n\log n)$ (where $\alpha(n)$ is the inverse-Ackermann function),
        and every $P$ with $\Ex(P,n)=\Omega(n\log n)$ is
        witnessed by one of two closely related constructions
        \cite{FurediH92,Tardos05}.  It may be that a finite number of witness constructions characterize the set of patterns outside of $\mathcal{P}_{\operatorname{lin}}$.

        Although characterizing linear patterns seems to be beyond reach, 
        a nice and simple characterization of linear \emph{connected} patterns is given in \cite{FurediKMV20}. 
        Here we call $P$ \emph{connected} if the corresponding bipartite graph $G(P)$ is connected.
\item[Quasilinear Patterns.] Let $\mathcal{P}_{\operatorname{qlin}}$ be the set
of all $P$ for which $\Ex(P,n)\leq n2^{(\alpha(n))^{O(1)}}$,
where $\alpha(n)$ is the inverse-Ackermann function.
Functions of this type are called \emph{quasilinear} 
and show up in the analysis of (generalized) Davenport-Schinzel sequences~\cite{HS86,ASS89,Klazar92,Nivasch10,Pettie-DS-JACM,Pettie-GenDS11,Pettie15-SIDMA} and other combinatorial problems~\cite{AlonKNSS08-J}.
A pattern is \emph{light}
if it contains exactly one 1 per column.  
All light patterns are in $\mathcal{P}_{\operatorname{qlin}}$~\cite{Klazar92,Nivasch10,Pettie15-SIDMA} and all patterns known to be in $\mathcal{P}_{\operatorname{qlin}}\backslash \mathcal{P}_{\operatorname{lin}}$ are either light,
or composed of light or linear patterns via Keszegh's~\cite{Keszegh09} joining operation.\footnote{Keszegh~\cite{Keszegh09} observed that if $A$ has a 1 in its southeast corner and $B$ has a 1 in its notherwest corner, that by \emph{joining} them at their corners, the resulting pattern $A\oplus B$ has extremal function $\Ex(A\oplus B,n) \leq \Ex(A,n)+\Ex(B,n)$.
\[
A\oplus B = 
\left(\,\begin{array}{ccccccc}
\cline{1-4}
\multicolumn{1}{|c}{ } &&& \multicolumn{1}{c|}{} \\
\multicolumn{1}{|c}{ } &A&& \multicolumn{1}{c|}{} &&& \\\cline{4-7}
\multicolumn{1}{|c}{ } &&\zero{\hcm[.26]{\bu}}& \multicolumn{1}{|c|}{} &&& \multicolumn{1}{c|}{}\\\cline{1-4}
&&& \multicolumn{1}{|c}{} &&B& \multicolumn{1}{c|}{}\\
&&& \multicolumn{1}{|c}{} &&& \multicolumn{1}{c|}{}\\\cline{4-7}
\end{array}\,\right)
\]
If $A$ is light, and $B$ is the transpose of a light pattern, then
$A\oplus B\in \mathcal{P}_{\operatorname{qlin}}$, but neither it nor 
its transpose is light.}
It is an open question whether there are infinitely many \emph{minimal}
non-linear patterns in $\mathcal{P}_{\operatorname{qlin}}$.  
It is consistent with known results that if $P$ is light, 
then $P\in \mathcal{P}_{\operatorname{lin}}$ iff it 
avoids the following patterns (or their reflections), which
correspond to order-3 Davenport-Schinzel sequences.
\begin{align*}
    \scalebox{.6}{$\left(\begin{array}{cccc}
    \bu&&&\\
    &&\bu&\\
    &\bu&&\bu
    \end{array}\right)$}
\hcm[1]
    \scalebox{.6}{$\left(\begin{array}{cccc}
    \bu&&\bu&\\
    &\bu&&\bu
    \end{array}\right)$}
\end{align*}
It would also be of interest to characterize, for each $t\geq 1$, 
(light) patterns $P\in\mathcal{P}_{\operatorname{qlin}}$ 
for which $\Ex(P,n)\geq n2^{\Omega(\alpha^t(n))}$.
\item[Acyclic Patterns.] All acyclic $P$ for which 
the bound $\Ex(P,n) \leq n(\log n)^{O(1)}$ is known 
can be proved via
the Pach-Tardos reductions~\cite[Lemmas 2, 3, and 4]{PachTardos06}  (see Lemma~\ref{lem:PachTardos} for one),
together with Keszegh's~\cite{Keszegh09} joining operation.\footnote{For example,
the following pattern has extremal function $O(n\log^3 n)$, but it is not subject
to any of the Pach-Tardos reductions.  It must first be decomposed into two 
patterns via Keszegh~\cite{Keszegh09}, 
each of which has extremal function $O(n\log^3 n)$ by~\cite{PachTardos06}.
\[
\left(\, \begin{array}{cccccc}
\cline{1-3}
\multicolumn{1}{|c}{\bu} & \bu & \multicolumn{1}{c|}{}\\
\multicolumn{1}{|c}{\bu} && \multicolumn{1}{c|}{\bu}\\
\multicolumn{1}{|c}{} &\bu& \multicolumn{1}{c|}{}\\\cline{3-6}
\multicolumn{1}{|c}{} &&\multicolumn{1}{|c|}{\bu}&&\bu&\multicolumn{1}{c|}{}\\\cline{1-3}
&&\multicolumn{1}{|c}{}&\bu&&\multicolumn{1}{c|}{\bu}\\
&&\multicolumn{1}{|c}{}&&\bu&\multicolumn{1}{c|}{\bu}\\\cline{3-6}
\end{array}\, \right)
\]}
If $P$ is \emph{degenerate}
then $\Ex(P,n)\leq n^{1+o(1)}$~\cite{KorandiTTW19}.
Upper bounding $\Ex(P,n)$ by $n(\log n)^{O_P(1)}$,
$n^{1+o(1)}$, or even $n^{2-\epsilon}$ for \emph{all} acyclic patterns $P$ is the main open problem in this area.  A more subtle problem is to determine which extremal functions are possible.  Tardos~\cite{Tardos05} gave examples
of \emph{pairs} of patterns with $\Ex(\{P,P'\},n)=\Theta(n\log\log n)$ and $\Ex(\{P'',P'''\},n)=\Theta(n\log n/\log\log n)$, but it is unknown whether these extremal functions can be achieved by 
a \emph{single} forbidden pattern.
\item[Arbitrary Patterns.] 
The \Kovari-\Sos-\Turan{} theorem~\cite{KovariST54}
implies that if $P\in\{0,1\}^{k\times l}$, then $\Ex(P,n) = O\left(n^{2-\frac{1}{\min\{k,l\}}}\right)$.
Pach and Tardos~\cite{PachTardos06} constructed $\Theta(n^{4/3})$-weight
matrices that avoid some arbitrarily long ordered cycles.
See Timmons~\cite{Timmons12} and \Gyori{} et al.~\cite{GyoriKMTTV18}
for more results on ordered cycles.  
Methuku and Tomon~\cite{MethukuT22} defined a matrix $P$ to be 
\emph{row $t$-partite} if it can be cut along rows into $t$ light matrices,
and $t\times t$-partite if both $P$ and $P^T$ are row $t$-partite.
They proved that if $P$ is row $t$-partite and $t\times t$-partite, that
$\Ex(P,n)$ is at most $n^{2-1/t+1/t^2 +o(1)}$ and $n^{2-1/t+o(1)}$, respectively.
\end{description}

A 0--1 matrix can be viewed as an 
\emph{ordered bipartite} graph,
where the two parts of the bipartition 
are given independent linear orders.  
Forbidden 0--1 matrix theory has been extended
to other types of ordered subgraph containment.

\begin{description}
\item[Vertex-ordered graphs] It is natural to drop the requirement that the forbidden graph and host graph be bipartite and just consider the extremal theory of 
arbitrary \emph{vertex-ordered graphs}: these are simple graphs with a linear order on their vertices. Containment between vertex-ordered graphs must preserve the ordering. The extremal function of such a (forbidden) graph $H$ was introduced in \cite{PachTardos06}: $\Ex(H,n)$ is the maximum number of edges of an $n$-vertex vertex-ordered graph that does not contain $H$. The connection to the extremal theory is 0--1 patterns is very close. A vertex-ordered graphs $H$ is called \emph{ordered bipartite} (or of interval chromatic number 2) if it is a bipartite graph with one partite class of vertices preceding the other in the vertex-order. If $H$ is not ordered bipartite, then $\Ex(H,n)=\Theta(n^2)$. If $H$ is ordered bipartite, let $P(H)$ be its bipartite 0--1 adjacency matrix (ordering the rows and columns consistent with the given vertex-order) 
and we have
$$\Ex(P(H),n/2)\le\Ex(H,n)=O(\Ex(P(H),n)\cdot \log n).$$
This implies that the vertex-ordered graph $H$ for which
$P(H)=X_t$ is an example of an ordered bipartite tree whose extremal function is $\Omega(n(\log n/\log\log n)^t)$. Previously no ordered bipartite tree was known whose extremal function was not $n\log^{1+o(1)}n$.

Although the connection between the 0--1 matrices and vertex-ordered graph is not close enough to directly translate questions about the the \emph{linearity} 
of extremal functions, the situation was similar in the two theories: although characterization of forbidden vertex-ordered graphs $H$ with $\Ex(H,n)=O(n)$ is currently beyond reach, the paper \cite{FurediKMV20} provides such a 
characterization for \emph{connected} $H$.
\item[Edge-ordered graphs] Rather than extend \Turan-type extremal graph theory by adding a total order on \emph{vertices},
we could instead add a total order on \emph{edges}.
This yields the extremal theory of \emph{edge-ordered graphs} as introduced by \cite{GerbnerMNPTV23}. A rich theory starts to form but it is not as closely related to the extremal theory of 0--1 patterns as the vertex-ordered variant is. Nevertheless, many results and problems have analogues in the two theories. The analogue of the interval chromatic number is the \emph{order chromatic number}: an edge-ordered graph has order chromatic number 2 if it is contained in the lexicographically ordered complete bipartite graph. The extremal function of an edge-ordered graph is $\Theta(n^2)$ if and only if its order chromatic number is not 2. While the characterization of edge-ordered graphs with linear extremal functions seems to also be beyond reach in general, 
such a characterization is given for connected edge-ordered graphs~\cite{KucheriyaT23b}. 
The extremal function of acyclic edge-ordered graphs of order-chromatic number 2 was conjectured to be $n^{1+o(1)}$ in \cite{GerbnerMNPTV23} and this has recently been established in \cite{KucheriyaT23}, where the stronger (and still open) conjecture was formulated that these extremal functions are all of the form $O(n\log^c n)$, where $c$ may depend on the forbidden edge-ordered graph. 
In contrast to the main result of this paper, it is still possible that the above bound holds with $c=1$ for \emph{all} acyclic edge-ordered graphs of order chromatic number 2.
\end{description}

\newcommand{\etalchar}[1]{$^{#1}$}

\end{document}